\date{}
\renewcommand{\uppercasenonmath}[1]{}
\theoremstyle{plain}
\newtheorem{theorem}{Theorem}[section]
\newtheorem{proposition}[theorem]{Proposition}
\newtheorem{lemma}[theorem]{Lemma}
\newtheorem{corollary}[theorem]{Corollary}
\newtheorem*{open question}{Open Question}
\newtheorem{definition}[theorem]{Definition}
\theoremstyle{definition}
\theoremstyle{remark}
\newtheorem{remark}[theorem]{Remark}
\newcommand{\ra}{\rightarrow}
\newcommand{\Tor}{\mbox{\rm Tor}}
\newcommand{\Q}{\mathcal{Q}}
\newcommand{\prodi}{\prod_{i\in I}}
\newcommand{\prodij}{\prod_{i\in I,j\in J}}
\newcommand{\wMLR}{\ensuremath{S\mbox{-}\{R\}\mbox{-}\mathrm{ML}}}
\newcommand{\wMLF}{\ensuremath{S\mbox{-}\mathcal{F}\mbox{-}\mathrm{ML}}}
\newcommand{\wMLP}{\ensuremath{S\mbox{-}\mathcal{P}\mbox{-}\mathrm{ML}}}
\newcommand{\wMLQ}{\ensuremath{S\mbox{-}\mathcal{Q}\mbox{-}\mathrm{ML}}}
\newcommand{\MLPsubQ}{\ensuremath{S\mbox{-}Psub\mathcal{Q}\mbox{-}\mathrm{ML}}}
\newcommand{\MLlimQ}{\ensuremath{S\mbox{-}\mathcal{\lim\limits_{\longrightarrow}Q}\mbox{-}\mathrm{ML}}}
\newcommand{\MLProdQ}{\ensuremath{S\mbox{-}Prod\mathcal{Q}\mbox{-}\mathrm{ML}}}
\newcommand{\MLcQ}{\ensuremath{S\mbox{-}\overline{\mathcal{Q}}\mbox{-}\mathrm{ML}}}
\def\ra{\rightarrow}
\def\Tor{{\rm Tor}}
\def\Ker{{\rm Ker}}
\def\Im{{\rm Im}}
\def\Coker{{\rm Coker}}
\begin{document}
\begin{center}
{\large  \bf New Characterizations of $S$-coherent rings}

\vspace{0.5cm}   Wei Qi$^{a}$,\ Xiaolei Zhang$^{b}$,\ Wei Zhao$^{c}$

{\footnotesize a.\ School of Mathematical Sciences, Sichuan Normal University, Chengdu 610068, China\\
b.\ Department of Basic Courses, Chengdu Aeronautic Polytechnic, Chengdu 610100, China\\
c.\ School of Mathematics, ABa Teachers University, Wenchuan 623002, China\\

E-mail: zxlrghj@163.com\\}
\end{center}

\bigskip
\centerline { \bf  Abstract}
\bigskip
\leftskip10truemm \rightskip10truemm \noindent

In this paper, we introduce and study the class $S$-$\mathcal{F}$-ML of $S$-Mittag-Leffler modules with respect to all flat modules. We show that a ring $R$ is $S$-coherent if and only if $S$-$\mathcal{F}$-ML is closed under submodules. As an application, we obtain the $S$-version of Chase Theorem: a ring $R$ is $S$-coherent if and only if any direct product of $R$ is $S$-flat if and only if any direct product of flat $R$-modules is $S$-flat. Consequently, we provide an answer to the open question proposed by D. Bennis and M. El Hajoui \cite{bh18}.
\vbox to 0.3cm{}\\
{\it Key Words:} $S$-coherent rings, $S$-flat modules, $S$-Mittag-Leffler modules.\\
{\it 2020 Mathematics Subject Classification:} 13B30, 13D05, 13E05.

\leftskip0truemm \rightskip0truemm
\bigskip

\section{Introduction}
Throughout this paper, $R$ is a commutative ring with identity and all modules are unitary. $S$ will always denote a multiplicative closed set of $R$. In the past few years, $S$-versions of some classical notions have been studied by many authors. In 2002, D. D. Anderson and T. Dumitrescu \cite{ad02} introduced $S$-finite modules and $S$-Noetherian rings and extended the classical Cohen's Theorem and Hilbert basis Theorem to some $S$-versions. In 2014, H. Kim, M. O. Kim and J. W. Lim \cite{kkl14} introduced $S$-strong Mori domains and proved that if $S$ is an anti-archimedean subset of a domain $D$, then $D$ is an $S$-strong Mori domain if and only if the polynomial ring $D[X]$ is an $S$-strong Mori domain, if and only if the $t$-Nagata ring $D[X]N_v$ is an $S$-strong Mori domain, if and only if $D[X]N_v$ is an $S$-Noetherian domain.
In 2015, J. W. Lim \cite{l15} studied Nagata ring of $S$-Noetherian domains and locally $S$-Noetherian domains and proved that  if $S$ is an anti-archimedean subset of a domain $D$, then $D$ is an $S$-Noetherian domain (respectively, locally $S$-Noetherian domain) if and only if the Nagata ring $D[X]_N$ is an $S$-Noetherian domain (respectively, locally $S$-Noetherian domain). In 2018, H. Kim and J. W. Lim \cite{kl18} introduced $S$-$*_w$-principal ideal domains and studied the local property, the Nagata type theorem, and the Cohen type theorem for  $S$-$*_w$-principal ideal domains.

In 2018, D. Bennis and M. El Hajoui\cite{bh18} introduced $S$-finitely presented modules and $S$-coherent rings which are $S$-version of finitely presented modules and coherent rings and obtained an $S$-version of Chase's result \cite[Theorem 2.2]{c} as bellow.
 \begin{theorem}\label{B S-chase}
\textbf{ \cite[Theorem 3.8]{bh18}}
The following assertions are equivalent:
\begin{enumerate}
    \item $R$ is an $S$-coherent ring;
    \item $(I:a)$ is an $S$-finite ideal of $R$, for every finitely generated ideal $I$ of $R$ and $a\in R$;
     \item  $(0:a)$ is an $S$-finite ideal of $R$ for every $a\in R$ and the intersection of  two finitely generated ideals of $R$ is an $S$-finite ideal of $R$.
\end{enumerate}
\end{theorem}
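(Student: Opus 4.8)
The plan is to prove the cycle of implications $(1)\Rightarrow(3)\Rightarrow(2)\Rightarrow(1)$, resting on three closure properties of $S$-finite modules that I would record first: a quotient of an $S$-finite module is $S$-finite; a module sitting in a short exact sequence between two $S$-finite modules is $S$-finite; and (the one using more than formal nonsense) if $0\to A\to B\to C\to 0$ is exact with $B$ being $S$-finite and $C$ being $S$-finitely presented, then $A$ is $S$-finite. Call this last statement the \emph{key lemma}; it is the $S$-analogue of the fact that the kernel of a surjection from a finitely generated module onto a finitely presented one is finitely generated, and it is exactly the point at which one may not quote ``submodules of $S$-finite modules are $S$-finite,'' which is false. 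I would prove it by the pullback form of Schanuel's lemma: writing $0\to K\to F\to C\to 0$ with $F$ finitely generated free and $K$ $S$-finite, the pullback $D$ of $B\to C\leftarrow F$ fits into exact sequences $0\to K\to D\to B\to 0$ and $0\to A\to D\to F\to 0$; the second splits since $F$ is projective, so $D\cong A\oplus F$, while the first exhibits $D$ as an extension of $S$-finite modules, hence $S$-finite, whence its summand $A$ is $S$-finite.

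For $(1)\Rightarrow(3)$, both assertions follow from the key lemma. Applying it to $0\to(0:a)\to R\xrightarrow{\cdot a}Ra\to 0$, in which $Ra$ is finitely generated and therefore $S$-finitely presented by $S$-coherence while $R$ is $S$-finite, gives that $(0:a)$ is $S$-finite. For the intersection, given finitely generated ideals $I,J$ the sum $I+J$ is finitely generated, hence $S$-finitely presented, so feeding $0\to I\cap J\to I\oplus J\to I+J\to 0$ (with $I\oplus J$ finitely generated) into the key lemma yields that $I\cap J$ is $S$-finite. The implication $(3)\Rightarrow(2)$ is then purely formal: for a finitely generated ideal $I$ and $a\in R$, multiplication by $a$ gives $0\to(0:a)\to(I:a)\to I\cap Ra\to 0$, whose outer terms are $S$-finite by $(3)$ (the right-hand one as an intersection of two finitely generated ideals), so the extension $(I:a)$ is $S$-finite.

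The heart of the matter is $(2)\Rightarrow(1)$, where I must show every finitely generated ideal is $S$-finitely presented, which I would do by induction on the number of generators of $I=(a_1,\dots,a_n)$. Writing $I'=(a_1,\dots,a_{n-1})$ and letting $\Omega_n=\ker(R^n\to I)$ and $\Omega_{n-1}=\ker(R^{n-1}\to I')$ be the syzygy modules, projection onto the last coordinate produces the short exact sequence $0\to\Omega_{n-1}\to\Omega_n\to(I':a_n)\to 0$. Here $(I':a_n)$ is $S$-finite directly by $(2)$; by induction $I'$ is $S$-finitely presented, so the key lemma applied to $0\to\Omega_{n-1}\to R^{n-1}\to I'\to 0$ shows $\Omega_{n-1}$ is $S$-finite; hence $\Omega_n$ is $S$-finite as an extension, and $I$ is $S$-finitely presented. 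The base case $n=1$ is the observation that $\ker(R\to Ra_1)=(0:a_1)$ is $S$-finite by $(2)$ with $I=0$. I expect the main obstacle to be not any single hard estimate but the discipline of never invoking closure under submodules: each time $S$-finiteness has to be transported backwards along a surjection it must be routed through the key lemma, and arranging the syzygy sequence and the induction hypothesis so that only $S$-finitely-presented objects (never merely $S$-finite ones) are pulled back is where the care lies.
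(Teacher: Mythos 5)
The paper does not prove this theorem at all: it is quoted verbatim from \cite[Theorem 3.8]{bh18} as background, so there is no internal proof to compare against. Judged on its own, your proof is correct and complete. Your closure facts hold: quotients and extensions of $S$-finite modules are $S$-finite (for extensions one combines the two elements $s,t\in S$ into $st\in S$), and your key lemma is the right $S$-analogue of ``the kernel of a surjection from a finitely generated module onto a finitely presented module is finitely generated,'' proved correctly by the Schanuel pullback: $D\cong A\oplus F$ is $S$-finite as an extension of the $S$-finite $B$ by the $S$-finite $K$, and $A$, being a direct summand (hence quotient) of $D$, is $S$-finite. The exact sequences you feed into it --- $0\to(0:a)\to R\to Ra\to 0$, \ $0\to I\cap J\to I\oplus J\to I+J\to 0$, \ $0\to(0:a)\to(I:a)\to I\cap Ra\to 0$, and the syzygy sequence $0\to\Omega_{n-1}\to\Omega_n\to(I':a_n)\to 0$ --- are all exact as claimed, and your induction in $(2)\Rightarrow(1)$ respects the crucial discipline that $S$-finiteness is only ever pulled back along surjections onto $S$-finitely presented targets, never along arbitrary inclusions (submodules of $S$-finite modules need not be $S$-finite). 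Your argument is essentially the classical Chase--Glaz characterization of coherent rings transported to the $S$-setting, which is also the strategy of the original proof in \cite{bh18}; the difference is one of packaging: Bennis and El Hajoui distribute the homological bookkeeping over several lemmas about $S$-finitely presented modules in short exact sequences, whereas you funnel everything through the single pullback lemma, which keeps the cycle $(1)\Rightarrow(3)\Rightarrow(2)\Rightarrow(1)$ short and isolates the one genuinely non-formal input.
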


Subsequently, they proposed an interesting open question as an $S$-version of Chase Theorem \cite[Theorem 2.1]{c}.
\begin{open question}
 How to give an $S$-version of flatness that characterizes $S$-coherent rings similarly to the classical case?
\end{open question}

One of the main purposes of this article is to characterize $S$-coherent rings inspired by this question. In Section 2 and Section 3, we introduce and study the notions of  $S$-flat modules and $S$-Mittag-Leffler modules with respect to a class $\Q$ (denoted by $S$-$\mathcal{Q}$-ML).

In Section 4, we study $S$-Mittag-Leffler modules with respect to all flat modules (denoted by $S$-$\mathcal{F}$-ML) and show the following result.\\
{\bf Proposition 4.2.}\ \ {\sl   An $R$-module $M$ is $S$-finitely presented if and only if it is in $S$-$\mathcal{F}$-ML and finitely generated.}\\
Then we give a new characterization of $S$-coherent rings using the class $S$-$\mathcal{F}$-ML.\\
{\bf Theorem 4.3.}\ \ {\sl  The following assertions are equivalent for a ring $R$:
\begin{enumerate}
    \item $R$ is an $S$-coherent ring;
    \item every ideal is in $S$-$\mathcal{F}$-ML;
     \item every finitely generated ideal is in $S$-$\mathcal{F}$-ML;
    \item every submodule of projective modules is in $S$-$\mathcal{F}$-ML;
    \item the class $S$-$\mathcal{F}$-ML is closed under finitely generated submodules;
    \item the class $S$-$\mathcal{F}$-ML is closed under submodules.
\end{enumerate}}
Utilizing this characterization, we finally obtain the following $S$-version of Chase Theorem.\\
{\bf Theorem 4.4.}\textbf{ $($the $S$-version of Chase Theorem$)$}\ \ {\sl The following assertions are equivalent:
\begin{enumerate}
    \item $R$ is an $S$-coherent ring;
    \item any product of flat $R$-modules is $S$-flat;
       \item any product of  projective $R$-modules is $S$-flat;
        \item any product of $R$ is $S$-flat;
\end{enumerate}}

\section{Preliminaries}
In this section, we will investigate some $S$-version of classical definitions on finitely generated modules, finitely presented modules, coherent rings and flat modules. Let $S$ be a multiplicative closed set of $R$.
\begin{definition}{ Let $M$ and $N$ be $R$-modules.

$(1)$ $\tau_S(M)=\{x\in M|sx=0$ for some $s\in S\}$ is called the total $S$-torsion submodule of $M$. If $\tau_S(M)=0$, then $M$ is called an $S$-torsion-free module; If $\tau_S(M)=M$, then $M$ is called an $S$-torsion module.

$(2)$ An $R$-homomorphism $f:M\rightarrow N$ is an $S$-monomorphism (resp. $S$-epimorphism, $S$-isomorphism) if the induced $R_S$-homomorphism $f_S:M_S\rightarrow N_S$ is a monomorphism (resp. an epimorphism, an isomorphism).

$(3)$ A sequence $0\rightarrow M\rightarrow N\rightarrow L\rightarrow 0$ is $S$-exact if the induced sequence $0\rightarrow M_S\rightarrow N_S\rightarrow L_S\rightarrow 0$ is exact.}

\end{definition}

\begin{remark}\label{FPFG} It is easy to verify the following assertions.
\begin{enumerate}
   \item An $R$-homomorphism $f:M\rightarrow N$ is an $S$-monomorphism if and only if $\Ker (f)$ is $S$-torsion.

   \item An $R$-homomorphism $f:M\rightarrow N$ is an $S$-epimorphism if and only if $\Coker (f)$ is $S$-torsion.

   \item The class of $S$-torsion modules is closed under submodules, quotient modules, extensions and directed limits.

\end{enumerate}
\end{remark}

The following definition follows from \cite{bh18}.
\begin{definition}\label{D S-coh}{
$(1)$ An $R$-module $M$ is said to be $S$-finite, if there exists a finitely generated submodule $N$ of $M$ such that $sM\subseteq N$ for some $s\in S$.

$(2)$ An $R$-module $M$ is said to be $S$-finitely presented, if there exists an exact sequence of $R$-modules $0\rightarrow K\rightarrow F\rightarrow M\rightarrow 0$, where $K$ is $S$-finite and $F$ is a finitely generated free $R$-module.

$(3)$ A ring $R$ is $S$-Noetherian provided that every ideal of $R$ is $S$-finite.

$(4)$  A ring $R$ is $S$-coherent provided that every finitely generated ideal of $R$ is $S$-finitely presented.}

\end{definition}

\begin{remark}\label{FPFG}It is easy to verify the following assertions.
\begin{enumerate}
   \item For any multiplicative closed set $S$, every $S$-finitely presented module is finitely generated, every finitely generated module is $S$-finite.

   \item In Definition \ref{D S-coh}(1), since $sM\subseteq N\subseteq  M$, we obtain $N_S=M_S$.

  \item Every $S$-Noetherian ring is an $S$-coherent ring, see \cite[Remark 3.4(1)]{bh18}.

  \item If $R$ is an $S$-coherent ring, then $R_S$ is a coherent ring. Indeed, let  $I_S$ be a finitely generated ideal of $R_S$ such that $I$ is a finitely generated ideal of $R$, then there is an exact sequence $0\rightarrow K\rightarrow R^n\rightarrow I\rightarrow 0$ such that $K$ is $S$-finite. Then $K_S$ is finitely generated as an $R_S$-ideal, and thus $R_S$ is a coherent ring.
\end{enumerate}
\end{remark}

\begin{definition}\label{s-flat}{Let $M$ be an $R$-module, then $M$ is said to be $S$-flat if for any finitely generated ideal $I$ of $R$, the natural homomorphism $I\otimes_RM\rightarrow R\otimes_RM$ is an $S$-monomorphism.
}
\end{definition}

Obviously, every flat module is $S$-flat. However, the converse does not hold. Indeed, let $R$ be a domain not a field, $S$ be the set of nonzero elements in $R$, then every  $R$-module is $S$-flat. Thus there exists some  $S$-flat module which is not flat.

Now,  we give a characterization of $S$-flat modules.

\begin{proposition}\label{c s-flat} Let $M$ be an $R$-module, the following assertions are equivalent:

\begin{enumerate}
   \item $M$ is $S$-flat;
   \item for any finitely generated ideal $I$ of $R$,  $\phi: I\otimes_RM\rightarrow IM$ is an $S$-isomorphism;
   \item $M_S$ is a flat $R_S$-module.
\end{enumerate}
\end{proposition}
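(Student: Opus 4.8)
The plan is to establish $(1)\Leftrightarrow(2)$ by a direct diagram argument using the canonical factorization of the multiplication map, and to establish $(1)\Leftrightarrow(3)$ by localizing at $S$ and invoking the classical ideal-theoretic criterion for flatness over $R_S$. Throughout I would freely use the facts recorded in Remark \ref{FPFG}: a homomorphism $f$ is an $S$-monomorphism (resp. $S$-isomorphism) precisely when $f_S$ is a monomorphism (resp. isomorphism), equivalently when $\Ker f$ (resp. $\Ker f$ and $\Coker f$) is $S$-torsion; together with the exactness of localization and its commutation with tensor products, $(I\otimes_R M)_S\cong I_S\otimes_{R_S}M_S$.

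For $(1)\Leftrightarrow(2)$, I would note that the natural map $I\otimes_R M\to R\otimes_R M\cong M$ factors as $I\otimes_R M\xrightarrow{\phi} IM\hookrightarrow M$, where the second arrow is the genuine inclusion and $\phi$ is surjective by the definition of $IM$ as the image. Since the inclusion $IM\hookrightarrow M$ is injective, $\Ker(\phi)$ coincides with the kernel of the composite $I\otimes_R M\to M$. Hence $M$ is $S$-flat, i.e. this composite is an $S$-monomorphism for every finitely generated $I$, exactly when $\Ker\phi$ is $S$-torsion, i.e. $\phi$ is an $S$-monomorphism. As $\phi$ is already surjective (hence an $S$-epimorphism), being an $S$-monomorphism is equivalent to being an $S$-isomorphism, which is precisely $(2)$.

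For $(1)\Leftrightarrow(3)$, I would localize. Applying $-_S$ to the multiplication map $I\otimes_R M\to M$ yields, after the natural identifications, the map $I_S\otimes_{R_S}M_S\to M_S$; and by Remark \ref{FPFG} the original map is an $S$-monomorphism iff its localization is a monomorphism. Thus $M$ is $S$-flat iff $I_S\otimes_{R_S}M_S\to M_S$ is injective for every finitely generated ideal $I$ of $R$. The bridge to $(3)$ is that every finitely generated ideal $J$ of $R_S$ has the form $J=I_S$ for a finitely generated ideal $I$ of $R$ (clear the denominators of a finite generating set), and conversely $I_S$ is finitely generated whenever $I$ is. Combined with the classical criterion that an $R_S$-module is flat iff $J\otimes_{R_S}M_S\to M_S$ is injective for all finitely generated ideals $J$ of $R_S$, this yields $M_S$ flat $\Leftrightarrow$ $M$ is $S$-flat.

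I expect the only genuinely delicate point to be the careful bookkeeping in $(1)\Leftrightarrow(3)$: one must confirm that the localization of the natural map $I\otimes_R M\to M$ is, under the canonical isomorphisms, exactly the structure map $I_S\otimes_{R_S}M_S\to M_S$ whose injectivity the flatness criterion concerns, and that the correspondence $I\mapsto I_S$ carries finitely generated ideals of $R$ onto all finitely generated ideals of $R_S$. Both are routine, so no serious obstacle is anticipated; the proof is essentially a translation between $S$-exactness over $R$ and ordinary exactness over $R_S$.
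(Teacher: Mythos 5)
Your proof is correct and takes essentially the same route as the paper: $(1)\Leftrightarrow(2)$ via the factorization $I\otimes_R M\xrightarrow{\phi} IM\hookrightarrow M$ (the paper phrases this as a commutative diagram), and $(1)\Leftrightarrow(3)$ by localizing at $S$, using $(I\otimes_R M)_S\cong I_S\otimes_{R_S}M_S$ and the classical finitely generated ideal criterion for flatness over $R_S$. If anything, your write-up is a bit more careful than the paper's, since you make explicit that surjectivity of $\phi$ upgrades ``$S$-monomorphism'' to ``$S$-isomorphism'' in $(2)$, and that every finitely generated ideal of $R_S$ is of the form $I_S$ for a finitely generated ideal $I$ of $R$ --- both points the paper uses silently.
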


\begin{proof}
$(1)\Leftrightarrow (2):$ Let $I$ be a finitely generated ideal of $R$, consider the following commutative diagram,

$$\xymatrix@R=20pt@C=15pt{
  & I\otimes_R M\ar[d]_{\phi }\ar[r]^{f} & R\otimes_R M\ar[d]_{\cong}\\
   0 \ar[r]^{} & IM \ar[r]^{} &RM.\\}$$
We have $M$ is $S$-flat if and only if  $f$ is an $S$-monomorphism if and only if $\phi$ is an $S$-monomorphism.

$(1)\Rightarrow (3):$ Let $I_S$ be a finitely generated ideal of $R_S$, where $I$ is a finitely generated ideal of $R$. Since $M$ is $S$-flat, the natural homomorphism $I\otimes_RM\rightarrow R\otimes_RM$ is an $S$-monomorphism. By localizing at $S$, the natural homomorphism
$$I_S\otimes_{R_S}M_S\cong (I\otimes_RM)_S\rightarrow (R\otimes_RM)_S\cong R_S\otimes_{R_S}M_S$$ is an $R_S$-monomorphism. Thus $M_S$ is a flat $R_S$-module.

$(3)\Rightarrow (1):$ Let $I$ be a finitely generated ideal of $R$, then $I_S$ is a finitely generated ideal of $R_S$. Since $M_S$ is a flat $R_S$-module, the natural homomorphism $(I\otimes_RM)_S\rightarrow (R\otimes_RM)_S$ is an $R_S$-monomorphism. So, $M$ is an $S$-flat module.
\end{proof}

\section{$S$-Mittag-Leffler modules with respect to a class of $R$-modules}
Recall the classical case in \cite{hh}. Let $\Q$ be a class of $R$-modules and $M$ an $R$-module.
We say that $M$ is \emph{Mittag-Leffler} with respect to $\Q$ if the canonical map
\begin{center}
$\phi_{M,\Q}: M\bigotimes_R\prodi Q_i\rightarrow \prodi  (M\bigotimes_R Q_i)$
\end{center}
is a monomorphism for any family $\{Q_i\}_{i\in I}$ of modules in $\Q$.
In case that $\Q$ is the class of all $R$-modules, we say $M$ is Mittag-Leffler. In {\cite[Corollary 4.3]{mc}}, the authors characterized coherent rings using homological properties of Mittag-Leffler modules.  In order to characterize $S$-coherent rings, we introduce and study $S$-Mittag-Leffler modules with respect to a given class $\mathcal{Q}$.

\begin{definition}{ Let $\mathcal{Q}$ be a  class of $R$-modules, $M$ is said  to be an $S$-Mittag-Leffler module with respect to $\mathcal{Q}$, if the natural homomorphism $\phi_{M,\Q}$ is an $S$-monomorphism.}
\end{definition}
We denote $S$-$\mathcal{Q}$-ML to be the class of all $S$-Mittag-Leffler modules with respect to $\mathcal{Q}$. When  $\mathcal{Q}$ is the class $\mathcal{F}$ of all flat modules (resp. the class $\mathcal{P}$ of all  projective $R$-modules), we denote it by $S$-$\mathcal{F}$-ML(resp. $S$-$\mathcal{P}$-ML). Obviouly,  Mittag-Leffler modules with respect to $\mathcal{Q}$ and $S$-torsion modules are all in $S$-$\mathcal{Q}$-ML. The class $S$-$\mathcal{Q}$-ML  is closed under pure submodules, pure extensions. Any direct sum of modules is in  $S$-$\mathcal{Q}$-ML if and only if each direct summand is in  $S$-$\mathcal{Q}$-ML. If $N$ is a finitely generated submodule of $M$ in $S$-$\mathcal{Q}$-ML, then $M/N$ is also in $S$-$\mathcal{Q}$-ML.

Let $\mathcal{C}$ be a class of $R$-modules, if $\mathcal{C}$ is closed under pure submodules, direct products and direct limits, then $\mathcal{C}$ is said to be a \emph{definable class }(see \cite[Theorem 3.4.7]{P09}). Let $\mathcal{Q}$ be a class of $R$-modules. The class $\overline{\mathcal{Q}}$ which denotes the smallest definable class containing $\mathcal{Q}$ is said to be the definable closure of $\mathcal{Q}$. Note that $\overline{\mathcal{Q}}$ can be constructed by closing $\mathcal{Q}$ under direct products, then under pure submodules and finally under directed limits. For the sake of simplification, we use the following symbols,

\begin{tabular}{ll}
$\lim\limits_{\longrightarrow }\mathcal{Q}$: &all directed limits of modules in $\mathcal{Q}$,\\
$Prod\mathcal{Q}$: &all direct products of modules in $\mathcal{Q}$,\\
$Psub\mathcal{Q}$: &all pure submodules of modules in $\mathcal{Q}$,\\
$\overline{\mathcal{Q}}$:  & the definable closure of $\mathcal{Q}$.
\end{tabular}

\begin{proposition}\label{w-M}
Let $M$ be an $R$-module, then the following assertions are equivalent:
\begin{enumerate}
    \item $M\in\wMLQ$;
   \item$M\in\MLPsubQ$;
    \item$M\in\MLProdQ$;
   \item$M\in\MLlimQ$;
    \item$M\in\MLcQ$.
\end{enumerate}
\end{proposition}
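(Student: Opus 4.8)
The plan is to isolate a single nontrivial implication. The $S$-Mittag-Leffler condition is monotone in the reverse direction: if $\mathcal{A}\subseteq\mathcal{B}$ are classes of $R$-modules, then every $M$ that is $S$-Mittag-Leffler with respect to $\mathcal{B}$ is automatically $S$-Mittag-Leffler with respect to $\mathcal{A}$, simply because a family drawn from $\mathcal{A}$ is also a family drawn from $\mathcal{B}$. Since $\Q$ is contained in each of $Prod\,\Q$, $Psub\,\Q$ and $\varinjlim\Q$, and each of these is contained in $\overline{\Q}$, this monotonicity yields at once the implications $(5)\Rightarrow(2),(3),(4)$ and $(2),(3),(4)\Rightarrow(1)$. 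Thus the whole statement reduces to proving $(1)\Rightarrow(5)$, and for this I will use the description $\overline{\Q}=\varinjlim\big(Psub(Prod\,\Q)\big)$ recorded above, removing the three closure operations one at a time. Throughout I use that $\phi$ is an $S$-monomorphism exactly when $\Ker\phi$ is $S$-torsion, and that the class of $S$-torsion modules is closed under submodules.

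The first two reductions are naturality arguments. For products, let $\{P_j\}_{j\in J}$ be a family with $P_j=\prod_{k\in K_j}Q_{j,k}$ and $Q_{j,k}\in\Q$. Identifying $\prod_j P_j$ with the single product $\prod_{(j,k)}Q_{j,k}$, the projections $P_j\to Q_{j,k}$ assemble into a map $\gamma\colon\prod_j(M\otimes_R P_j)\to\prod_{(j,k)}(M\otimes_R Q_{j,k})$ satisfying $\gamma\circ\phi_{M,Prod\,\Q}=\phi_{M,\Q}$, where the right-hand side is the canonical map for the family $\{Q_{j,k}\}$. As that map is an $S$-monomorphism by hypothesis, its kernel is $S$-torsion; since $\Ker\phi_{M,Prod\,\Q}\subseteq\Ker\phi_{M,\Q}$, the map $\phi_{M,Prod\,\Q}$ is an $S$-monomorphism too, so $M\in\MLProdQ$. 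For pure submodules, suppose each $N_i$ is pure in $Q_i\in\mathcal{C}$. Then $\prod_i N_i$ is pure in $\prod_i Q_i$ and each $M\otimes_R N_i\to M\otimes_R Q_i$ is injective, so in the naturality square relating the canonical maps of $\{N_i\}$ and $\{Q_i\}$ both vertical maps are injective while the bottom map is an $S$-monomorphism. A short chase then shows the top kernel is $S$-torsion: any $x$ in it is sent by the injective left-hand vertical into the $S$-torsion kernel of the bottom map, so $s$ annihilates that image for some $s\in S$, and injectivity forces $sx=0$. Applying this first with $\mathcal{C}=\Q$ and then (after the product step) with $\mathcal{C}=Prod\,\Q$ gives $M\in\MLPsubQ$ and, more to the point, that $M$ is $S$-Mittag-Leffler with respect to $Psub(Prod\,\Q)$.

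The decisive step, and the one I expect to be the genuine obstacle, is stability under directed limits, since directed colimits do not commute with the infinite products built into the canonical map. The remedy is a finiteness reduction. Let $\{L_i\}_{i\in I}$ with $L_i=\varinjlim_{\lambda}C_{i,\lambda}$ and $C_{i,\lambda}\in\mathcal{C}$, and take $z\in\Ker\phi_{M,\varinjlim\mathcal{C}}$. Writing $z=\sum_{k=1}^{n}m_k\otimes(x_{k,i})_i$ as a finite sum and using directedness separately in each coordinate $i$, I can choose an index $\lambda(i)$ so that all of $x_{1,i},\dots,x_{n,i}$ lift to $C_i:=C_{i,\lambda(i)}$; this yields a lift $\tilde z\in M\otimes_R\prod_i C_i$ mapping to $z$. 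Pushing $\tilde z$ through the naturality square and using $M\otimes_R L_i\cong\varinjlim_{\lambda}(M\otimes_R C_{i,\lambda})$, each coordinate of $\phi_{M,\mathcal{C}}(\tilde z)$ dies in $M\otimes_R L_i$, hence already at some later stage $\mu(i)\ge\lambda(i)$; transporting along the transition maps produces $\tilde z'\in M\otimes_R\prod_i C_{i,\mu(i)}$ with $\phi_{M,\mathcal{C}}(\tilde z')=0$ for the family $\{C_{i,\mu(i)}\}_{i\in I}$ in $\mathcal{C}$. The hypothesis now bites: $\Ker\phi_{M,\mathcal{C}}$ is $S$-torsion, so $s\tilde z'=0$ for some $s\in S$, and since $\tilde z'$ maps to $z$ under the structure map $M\otimes_R\prod_i C_{i,\mu(i)}\to M\otimes_R\prod_i L_i$, we conclude $sz=0$. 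Thus $\Ker\phi_{M,\varinjlim\mathcal{C}}$ is $S$-torsion. Taking $\mathcal{C}=Psub(Prod\,\Q)$ promotes the conclusion of the previous paragraph to membership in $\MLcQ$, which is precisely $(1)\Rightarrow(5)$ and completes the proof.
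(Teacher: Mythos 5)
Your proposal is correct and follows essentially the same route as the paper: the same naturality triangle for products, the same purity square for pure submodules, and the same finite-sum lifting argument for directed limits, assembled via the paper's description of $\overline{\mathcal{Q}}$ as the closure under products, then pure submodules, then directed limits. If anything, you are slightly more explicit than the paper at the final assembly, where the paper's ``$(2)+(3)+(4)\Rightarrow(5)$ is obvious'' really means iterating the three steps with the base class enlarged each time ($\mathcal{Q}$, then $Prod\,\mathcal{Q}$, then $Psub(Prod\,\mathcal{Q})$), exactly as you spell out.
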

\begin{proof}
$(2)\Rightarrow(1)$, $(3)\Rightarrow(1)$, $(4)\Rightarrow(1)$ and $(5)\Rightarrow(1)$ are obvious.

$(1)\Rightarrow(2)$: For any subset $\{Q^{\prime}_i\}_{i\in I}$ from the class $Psub\mathcal{Q}$, there exists a family $\{Q_i\}_{i\in I}$ from $\mathcal{Q}$, such that $Q^{\prime}_i$ is a pure submodule of $Q_i$, for each $i\in I$. There exist a commutative diagram,
$$\xymatrix@R=30pt@C=50pt{
 0 \ar[r]^{}  & M\bigotimes_R \prodi Q^{\prime}_i\ar[d]_{\phi_{M}}\ar[r] &M\bigotimes_R \prodi Q_i\ar[d]_{\phi_{M,\Q}}\\
 0 \ar[r]^{}  &\prodi( M\bigotimes_R  Q^{\prime}_i )\ar[r] &\prodi (M\bigotimes_R Q_i)}$$
Since $\phi_{M,\Q}$ is an $S$-monomorphism, $\phi_M$ is also an $S$-monomorphism. That is, $M\in\MLPsubQ$.

$(1)\Rightarrow(3)$:  For any subset  $\{Q^{\prime}_i\}_{i\in I}$ from $Prod\mathcal{Q}$, there exists a family $\{Q_{i,j}\}_{i\in I,j\in J}$ from $\mathcal{Q}$ , such that $Q^{\prime}_i=\prod_{j\in J}Q_{i,j}$. There is a commutative diagram
$$\xymatrix@R=20pt@C=10pt{
M\bigotimes_R\prodi Q^{\prime}_i \ar[rr]^{\phi_{M,\Q}}\ar@{->}[rd]^{\phi_{M}} &&\prodij  (M\bigotimes_R Q_{i,j}) \\
    & \prodi(M\bigotimes_R Q^{\prime}_i)\ar[ru]&&    .\\}$$
Since $\phi_{M,\Q}$ is an $S$-monomorphism, $\phi_{M}$ is an $S$-monomorphism. Consequently, $M\in\MLProdQ$.

$(1)\Rightarrow(4)$:  For any subset $\{Q^{\prime}_i\}_{i\in I}$ from $\lim\limits_{\longrightarrow }\mathcal{Q}$, we have $Q^{\prime}_i={\lim\limits_{\longrightarrow }}(Q_{\alpha}^i,f_{\beta\alpha}^i)_{\beta,\alpha\in J_i}$ with $Q_{\alpha}^i\in \Q$ for any $\alpha\in J_i$. Suppose $f_{\alpha}^i:Q_{\alpha}^i\rightarrow Q^{\prime}_i$ is the canonical homomorphism. Next we will show $\phi_{M}: M\bigotimes_R\prodi Q^{\prime}_i\rightarrow \prodi  (M\bigotimes_R Q^{\prime}_i)$ is an $S$-homomorphism. Let $y=\sum_{j=1}^n x_j'\otimes(q_{j,i})_{i\in I}\in \Ker \phi_{M}$, we have $\sum_{j=1}^n x_j'\otimes q_{j,i}=0$ for any $i\in I$. Thus, for any $i\in I$ there exist $\alpha_i\in  I_i$ and $k_{1,i},....,k_{n,i}\in Q_{\alpha_i,i}$ such that $\sum_{j=1}^n x_j'\otimes k_{j,i}=0$ in $ M\otimes_R Q_{\alpha_i,i}$ and $f_{\alpha_i,i}(k_{j,i})=q_{j,i}$ for any $j=1,...,n$. We have the following commutative diagram

$$\xymatrix@R=30pt@C=50pt{
 M\bigotimes_R \prodi Q_{\alpha_i,i}\ar[d]_{\phi_{M,\Q}}\ar[r]^{M\otimes \prod_{i\in I}f_{\alpha_i}^i} &M\bigotimes_R \prodi Q^{\prime}_i\ar[d]_{\phi_{M}}\\
 \prodi( M\bigotimes_R  Q_{\alpha_i,i} )\ar[r]_{\prod_{i\in I}(M\otimes f_{\alpha_i}^i)} &\prodi (M\bigotimes_R Q^{\prime}_i).}$$
By construction, we obtain
$$y=\sum_{j=1}^n x_j'\otimes(q_{j,i})_{i\in I}=(M\otimes \prod_{i\in I}f_{\alpha_i}^i)(\sum_{j=1}^n x_j'\otimes(k_{j,i})_{i\in I})$$
and
$$\phi_{M,\Q}(\sum_{j=1}^n x_j'\otimes(k_{j,i})_{i\in I})=(\sum_{j=1}^n x_j'\otimes k_{j,i})_{i\in I}=0.$$  Since $\phi_{M,\Q}$ is an $S$-monomorphism, there exists $s\in S$ such that $s(\sum_{j=1}^n x_j'\otimes (k_{j,i})_{i\in I})=0$. Then $sy=s(\sum_{j=1}^n x_j'\otimes(q_{j,i})_{i\in I})=s(M\otimes \prod_{i\in I}f_{\alpha_i}^i)(\sum_{j=1}^n x_j'\otimes(k_{j,i})_{i\in I})=(M\otimes \prod_{i\in I}f_{\alpha_i}^i)(s(\sum_{j=1}^n x_j'\otimes(k_{j,i})_{i\in I}))=0$. Then $\Ker \phi_{M}$ is $S$-torsion and thus $\phi_{M}$ is an $S$-monomorphism.

$(2)+(3)+(4)\Rightarrow(5)$ is obvious.
\end{proof}

\begin{corollary}\label{PFRMLP}
Let $M$ be an $R$-module, then $M\in\wMLF$ if and only if $M\in\wMLP$ if and only if $M\in\wMLR$.
\end{corollary}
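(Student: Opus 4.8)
The plan is to derive this from Proposition \ref{w-M}. That proposition shows that $M\in\wMLQ$ if and only if $M\in\MLcQ$ for \emph{any} class $\Q$; hence the three notions $\wMLR$, $\wMLP$ and $\wMLF$ will coincide once we verify that the three classes $\{R\}$, $\Proj$ and $\F$ have the same definable closure, that is, $\overline{\{R\}}=\overline{\Proj}=\overline{\F}$.

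One chain of inclusions is immediate. Since $R$ is free, hence projective, and every projective module is flat, we have $\{R\}\subseteq\Proj\subseteq\F$, and therefore $\overline{\{R\}}\subseteq\overline{\Proj}\subseteq\overline{\F}$. It remains to prove the reverse inclusion $\overline{\F}\subseteq\overline{\{R\}}$. For this I would invoke Lazard's classical theorem: every flat $R$-module is a directed limit of finitely generated free modules. Each finitely generated free module $R^n$ is a finite direct product of copies of $R$ and so belongs to $Prod\{R\}$; consequently every flat module is a directed limit of modules in $Prod\{R\}$. By the explicit construction of the definable closure recalled above (close $\Q$ first under direct products, then under pure submodules, and finally under directed limits), such directed limits lie in $\overline{\{R\}}$. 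Thus $\F\subseteq\overline{\{R\}}$, and since $\overline{\{R\}}$ is definable, hence stable under the closure operation, we conclude $\overline{\F}\subseteq\overline{\{R\}}$.

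Combining the two chains yields $\overline{\{R\}}=\overline{\Proj}=\overline{\F}$. Applying the equivalence $(1)\Leftrightarrow(5)$ of Proposition \ref{w-M} separately to $\Q=\{R\}$, $\Q=\Proj$ and $\Q=\F$, each of $M\in\wMLR$, $M\in\wMLP$ and $M\in\wMLF$ is equivalent to membership of $M$ in the $S$-Mittag-Leffler class determined by this common definable closure, and therefore the three conditions are equivalent. The only nonroutine ingredient is the appeal to Lazard's theorem; everything else is a formal manipulation of the definable-closure construction, so I do not expect a genuine obstacle here.
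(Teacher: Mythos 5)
Your proof is correct and follows essentially the same route as the paper: both arguments reduce the statement to showing $\overline{\{R\}}=\overline{\Proj}=\overline{\F}$ and then invoke the equivalence $(1)\Leftrightarrow(5)$ of Proposition \ref{w-M}. The only cosmetic difference is that the paper embeds projectives as pure submodules of direct products of $R$ and realizes flat modules as direct limits of projectives, whereas you use Lazard's theorem to write flat modules directly as directed limits of finite products of copies of $R$; both are standard and equally valid.
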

\begin{proof}
Since any projective module is pure submodule of direct product of $R$ and any flat module is direct limit of projective modules, the definable closures of $\{R\}$ and of all projectives and all are flats are the same thing. Thus the consequence holds from Proposition \ref{w-M}.
\end{proof}

For a class $\mathcal{T}$ of $R$-modules, we denote $\mathcal{T}^{\top}$ the class  of all $R$-modules $M$ such that
$\Tor_1^R(T,M)=0$ for any $T\in \mathcal{T}$. Let $M$ be an $R$-module, $\mathcal{C}$ a class of $R$-modules, $\tau$ an ordinal. An increasing chain $(M_{\alpha}|\alpha\leq \tau)$ of submodules of $M$ is a $\mathcal{C}$-filtration of $M$ provided that $M_0=0$, $M_{\tau}=M$, $M_{\alpha}=\bigcup_{\beta<\alpha}M_{\beta}$ for a limit ordinal $\alpha$ and $M_{\beta+1}/M_{\beta}\in \mathcal{C}$ for any $\beta<\tau$. The following result which is similar to {\cite[Proposition 1.9]{hh}} is crucial to the study of $S$-Mittag-Leffler modules with respect to all flat modules.

\begin{proposition}\label{filtration prop}
Let $\mathcal{T}$ be a class of $R$-modules that are $S$-Mittag-Leffler with respect to $\mathcal{Q}\subseteq \mathcal{T}^{\top}$, then any module isomorphic to a direct summand of a $\mathcal{T} \bigcup \mathcal{P}$-filtered module is an $S$-Mittag-Leffler module with respect to $\mathcal{Q}$.
\end{proposition}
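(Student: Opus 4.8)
The plan is to reduce the statement to two elementary closure properties of the class $S$-$\mathcal{Q}$-ML that were already recorded after the definition: that $S$-$\mathcal{Q}$-ML is closed under direct summands, and that it is closed under suitable filtrations. First I would handle the filtration statement directly and treat the direct-summand passage as a trivial afterthought, since a module isomorphic to a direct summand of a module in $S$-$\mathcal{Q}$-ML is itself in $S$-$\mathcal{Q}$-ML (any direct sum is in $S$-$\mathcal{Q}$-ML iff each summand is). So the heart of the matter is: if $M$ is $\mathcal{T}\cup\mathcal{P}$-filtered, with $\mathcal{T}\subseteq S\text{-}\mathcal{Q}\text{-ML}$ and $\mathcal{Q}\subseteq\mathcal{T}^{\top}$, then $M\in S\text{-}\mathcal{Q}\text{-ML}$.

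First I would fix a $\mathcal{T}\cup\mathcal{P}$-filtration $(M_\alpha\mid\alpha\le\tau)$ of $M$ and proceed by transfinite induction on $\alpha$, showing $M_\alpha\in S\text{-}\mathcal{Q}\text{-ML}$ at each stage. The base case $M_0=0$ is clear, and the limit-ordinal case should follow from the directed-limit stability established in Proposition 3.3: since $M_\alpha=\varinjlim_{\beta<\alpha}M_\beta$ and each $M_\beta$ lies in $S\text{-}\mathcal{Q}\text{-ML}=\MLlimQ$ by induction, $M_\alpha$ is a directed limit of members of the class and hence again in the class. The successor step is where the real work sits: given $M_\beta\in S\text{-}\mathcal{Q}\text{-ML}$ and the short exact sequence
$$0\rightarrow M_\beta\rightarrow M_{\beta+1}\rightarrow M_{\beta+1}/M_\beta\rightarrow 0$$
with quotient $M_{\beta+1}/M_\beta\in\mathcal{T}\cup\mathcal{P}$, I must deduce $M_{\beta+1}\in S\text{-}\mathcal{Q}\text{-ML}$.

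For that successor step the plan is to show the sequence above is pure, after which the closure of $S\text{-}\mathcal{Q}\text{-ML}$ under pure extensions (noted just after the definition) finishes it, since the two outer terms are both in the class — $M_\beta$ by induction and $M_{\beta+1}/M_\beta$ either by the hypothesis $\mathcal{T}\subseteq S\text{-}\mathcal{Q}\text{-ML}$ or, in the projective case, because projective modules are Mittag-Leffler and hence in $S\text{-}\mathcal{Q}\text{-ML}$ for every $\mathcal{Q}$. Purity here is exactly where the hypothesis $\mathcal{Q}\subseteq\mathcal{T}^{\top}$ must be used: tensoring the sequence with any $Q\in\mathcal{Q}$ and appealing to $\Tor_1^R(M_{\beta+1}/M_\beta,Q)=0$ when the quotient lies in $\mathcal{T}$ (the projective quotients cause no trouble at all) keeps the sequence exact against test objects in $\mathcal{Q}$, which is what I need to compare the maps $\phi_{M_\beta,\mathcal{Q}}$, $\phi_{M_{\beta+1},\mathcal{Q}}$, $\phi_{M_{\beta+1}/M_\beta,\mathcal{Q}}$ through a $3\times 3$ diagram with exact rows and deduce that $\Ker\phi_{M_{\beta+1},\mathcal{Q}}$ is $S$-torsion from the $S$-torsion-ness of the outer kernels (a short diagram chase using that $S$-torsion modules are closed under submodules and extensions). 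The main obstacle I anticipate is bookkeeping in this diagram: one must be careful that tensoring the direct-product family $\prod_{i\in I}Q_i$ against the sequence stays exact and that the relevant $\Tor$-vanishing survives the product, so that the top and bottom rows of the comparison diagram are genuinely exact and the snake/chase yields an $S$-torsion kernel rather than merely a kernel mapping to $S$-torsion.
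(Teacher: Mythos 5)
The limit-ordinal step of your plan has a genuine gap. Proposition 3.3 does not say that the class $S$-$\mathcal{Q}$-ML is closed under directed limits of its members; it says that a fixed module $M$ lying in $S$-$\mathcal{Q}$-ML automatically lies in $S$-$\varinjlim\mathcal{Q}$-ML, i.e.\ the \emph{test class} $\mathcal{Q}$ may be enlarged by direct limits without changing which modules $M$ qualify. Closure of $S$-$\mathcal{Q}$-ML itself under directed limits is false in general: every module is a directed limit of finitely presented modules, and finitely presented modules lie in $S$-$\mathcal{F}$-ML (Proposition 4.2), so such closure would put every module in $S$-$\mathcal{F}$-ML and, by Theorem 4.3, would make every ring $S$-coherent, which is absurd. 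What actually saves the limit step is the special nature of the chain, and this is where the hypothesis $\mathcal{Q}\subseteq\mathcal{T}^{\top}$ enters a second time: for $\beta<\alpha$ the quotient $M_{\alpha}/M_{\beta}$ is itself $\mathcal{T}\cup\mathcal{P}$-filtered, and the class $\{N\;:\;\Tor_1^R(N,Q)=0\}$ is closed under extensions and unions of chains, so $\Tor_1^R(M_{\alpha}/M_{\beta},Q_i)=0$ and each map $\prod_{i\in I}(M_{\beta}\otimes_R Q_i)\rightarrow\prod_{i\in I}(M_{\alpha}\otimes_R Q_i)$ is a monomorphism. Then any $x\in\Ker\phi_{M_{\alpha}}$ is the image of some $y\in M_{\beta}\otimes_R\prod_{i\in I}Q_i$ with $\beta<\alpha$; commutativity plus the injectivity just noted forces $\phi_{M_{\beta}}(y)=0$, so $sy=0$ for some $s\in S$ by induction, whence $sx=0$. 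This is the paper's argument, and it is the piece your plan is missing.

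Your successor step and the direct-summand reduction are essentially right and agree with the paper, but two points need care. First, the filtration sequences are \emph{not} pure in general, so your opening plan of proving purity and citing closure under pure extensions cannot work literally: for instance $0\rightarrow 2\mathbb{Z}\rightarrow\mathbb{Z}\rightarrow\mathbb{Z}/2\mathbb{Z}\rightarrow 0$ is a filtration with finitely presented factors that is not pure. Your fallback — exactness only against test objects of $\mathcal{Q}$, then a diagram chase showing $\Ker\phi_{M_{\beta+1}}$ is $S$-torsion from the $S$-torsion-ness of the outer kernels — is the correct route and is what the paper does. Second, you do not need (and should not try to prove) that Tor-vanishing ``survives the product,'' i.e.\ $\Tor_1^R(M_{\beta+1}/M_{\beta},\prod_{i\in I}Q_i)=0$; Tor does not commute with products. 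All that is required is the termwise vanishing $\Tor_1^R(M_{\beta+1}/M_{\beta},Q_i)=0$, which makes each $M_{\beta}\otimes_R Q_i\rightarrow M_{\beta+1}\otimes_R Q_i$ injective, and then a product of monomorphisms is a monomorphism; together with right-exactness of the top row this is enough for the chase.
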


\begin{proof}

We imitate the proof given by {\cite[Proposition 1.9]{hh}} with some changes. We can also assume that $\mathcal{T}$ contains $\mathcal{P}$. Indeed, all projective modules are in $S$-$\mathcal{Q}$-ML and $(\mathcal{T}\bigcup \mathcal{P})^{\top}= \mathcal{T}^{\top}$.

Let $M$ be a $\mathcal{T}$-filtered $R$-module, $\tau$ be an ordinal such that there exists an $\mathcal{T}$-filtration $\{M_{\alpha}|\alpha\leq \tau\}$. We prove by induction that $M_{\alpha}$ are all  in $S$-$\mathcal{Q}$-ML.

Firstly, let $\alpha$ be a successor ordinal, $\{Q_i|i\in I\}$ be a set of modules in $\mathcal{Q}$. Consider the exact sequence $0\rightarrow M_{\alpha-1}\rightarrow  M_{\alpha}\rightarrow M_{\alpha}/M_{\alpha-1}\rightarrow 0$, we obtain the following commutative diagram of exact sequences.
$$\xymatrix@R=25pt@C=10pt{
  & M_{\alpha-1}\otimes_R \prod_{i\in I} Q_i\ar[d]_{\phi_{M_{\alpha-1}}}\ar[r]^{} & M_{\alpha}\otimes_R  \prod_{i\in I} Q_i \ar[d]_{\phi_{M_{\alpha}}}\ar[r]^{} & M_{\alpha}/M_{\alpha-1}\otimes_R  \prod_{i\in I} Q_i \ar[d]_{\phi_{M_{\alpha}/M_{\alpha-1}}}\ar[r]^{} &  0\\
0 \ar[r]^{} & \prod_{i\in I} (M_{\alpha-1}\otimes_R Q_i )\ar[r]^{f} &\prod_{i\in I} (M_{\alpha}\otimes_R Q_i)  \ar[r]^{} & \prod_{i\in I}( M_{\alpha}/M_{\alpha-1}\otimes_R Q_i )\ar[r]^{} &  0\\}$$

Note that $f$ is an monomorphism as $M_{\alpha}/M_{\alpha-1}\in \mathcal{T}\subseteq \mathcal{Q}^{\top}$. Since $\phi_{M_{\alpha-1}}$ and $\phi_{M_{\alpha}/M_{\alpha-1}}$ are $S$-monomorphisms, we have $\phi_{M_{\alpha}}$ is an $S$-monomorphism.

Secondly, let $\alpha$ be a limit ordinal such that $M_{\beta}$ be in $S$-$\mathcal{Q}$-ML for any $\beta<\alpha$, we show that $M_{\alpha}=\bigcup_{\beta<\alpha}M_{\beta}$ is $S$-Mittag-Leffler with respect to $\mathcal{Q}$. Let $\{Q_i|i\in I\}$ be a set of modules in $\mathcal{Q}$ and $x\in \Ker\phi_{M_{\alpha}}$, then there exists $\beta<\alpha$ and $y\in M_{\beta}\otimes_R \prod_{i\in I} Q_i$ such that $ x=(\varepsilon_{\beta}\otimes_R \prod_{i\in I} Q_i)(y)$, where $\varepsilon_{\beta}:M_{\beta}\rightarrow M_{\alpha}$ is the natural monomorphism. Consider the following commutative diagram,

$$\xymatrix@R=35pt@C=60pt{
 M_{\beta}\otimes_R \prod_{i\in I} Q_i\ar[d]_{\phi_{M_{\beta}}}\ar[r]^{\varepsilon_{\beta}\otimes_R \prod_{i\in I} Q_i} & M_{\alpha}\otimes_R  \prod_{i\in I} Q_i \ar[d]_{\phi_{M_{\alpha}}}\\
\prod_{i\in I} (M_{\beta}\otimes_R Q_i) \ar@{^{(}->}[r]^{\prod_{i\in I} (\varepsilon_{\beta}\otimes_R  Q_i)} &\prod_{i\in I} (M_{\alpha}\otimes_R Q_i ).\\}$$
Note that $\phi_{M_{\beta}}$ is an $S$-monomorphism and $\prod_{i\in I} (\varepsilon_{\beta}\otimes_R  Q_i) $ is a monomorphism since $\Tor_1^R(M_{\alpha}/M_{\beta},Q_i)=0$. Thus $y\in \Ker\ (\phi_{M_{\beta}}\circ \prod_{i\in I} (\varepsilon_{\beta}\otimes_R  Q_i))$ is $S$-torsion, and there is some $s\in S$ such that $sy=0$.
Therefore, $sx=s(\varepsilon_{\beta}\otimes_R \prod_{i\in I} Q_i(y))=(s\varepsilon_{\beta}(y))\otimes_R \prod_{i\in I} Q_i=(\varepsilon_{\beta}(sy))\otimes_R \prod_{i\in I} Q_i=0$.  Consequently, $\phi_{M_{\alpha}}$ is an $S$-monomorphism.

\end{proof}

\begin{corollary}\label{w-MLP}
The following statements hold.
\begin{enumerate}
    \item The class $\wMLF$ is closed under $\wMLF$-filtration.
    \item Let $(M_{\alpha}|\alpha\leq \tau)$ be a chain with each $M_{\alpha} \in \wMLF$. Then  $M=\bigcup_{\beta<\alpha}M_{\beta}$ is also in $\wMLF$.
\end{enumerate}
\end{corollary}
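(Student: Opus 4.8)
The plan is to obtain both parts as specialisations of Proposition \ref{filtration prop} to the case $\mathcal{Q}=\F$. For part (1) I would apply that proposition directly with $\mathcal{T}=\wMLF$ and $\mathcal{Q}=\F$. The hypotheses are immediate: by definition every module in $\mathcal{T}=\wMLF$ is $S$-Mittag-Leffler with respect to $\F$, and the inclusion $\F\subseteq\mathcal{T}^{\top}$ holds for free, since for any flat $Q$ one has $\Tor_1^R(T,Q)=0$ for \emph{every} module $T$, in particular for every $T\in\mathcal{T}$. Moreover $\Proj\subseteq\wMLF=\mathcal{T}$ (projective modules lie in \wMLQ, as already used in the proof of Proposition \ref{filtration prop}), so a $\wMLF$-filtration is in particular a $\mathcal{T}\cup\Proj$-filtration. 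Proposition \ref{filtration prop} then yields that the filtered module lies in $\wMLF$, which is exactly (1).

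For part (2) the point to notice first is that a chain $(M_{\alpha}\mid\alpha\le\tau)$ with each $M_{\alpha}\in\wMLF$ need not be a filtration in the sense of Proposition \ref{filtration prop}, since nothing is assumed about the successive quotients; hence I cannot simply quote (1). Instead I would re-run the limit-ordinal step of the proof of Proposition \ref{filtration prop}. Fix a family $\{Q_i\}_{i\in I}$ of flat modules and write $M=\bigcup_{\alpha}M_{\alpha}$. Since tensor product commutes with the directed union, any $x\in\Ker\phi_{M}$ is the image of some $y\in M_{\beta}\otimes_R\prod_{i\in I}Q_i$ under $\varepsilon_{\beta}\otimes_R\prod_{i\in I}Q_i$, where $\varepsilon_{\beta}:M_{\beta}\hookrightarrow M$ is the inclusion. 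I would then consider the commutative square relating $\phi_{M_{\beta}}$ and $\phi_{M}$ through the horizontal maps $\varepsilon_{\beta}\otimes_R\prod_{i\in I}Q_i$ and $\prod_{i\in I}(\varepsilon_{\beta}\otimes_R Q_i)$, exactly as in the limit step there.

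The one input that makes this work is that each $Q_i$ is flat: this forces $\Tor_1^R(M/M_{\beta},Q_i)=0$, so every $\varepsilon_{\beta}\otimes_R Q_i$ is injective and hence $\prod_{i\in I}(\varepsilon_{\beta}\otimes_R Q_i)$ is a monomorphism. This is precisely the place where, in the general proposition, one needed both $\mathcal{Q}\subseteq\mathcal{T}^{\top}$ and the continuity of the filtration, and it is exactly what becomes automatic once $\mathcal{Q}=\F$. A diagram chase then gives $\phi_{M_{\beta}}(y)=0$, i.e. $y\in\Ker\phi_{M_{\beta}}$, which is $S$-torsion because $M_{\beta}\in\wMLF$; so $sy=0$ for some $s\in S$, whence $sx=0$. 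Thus $\Ker\phi_{M}$ is $S$-torsion and $\phi_{M}$ is an $S$-monomorphism, giving $M\in\wMLF$. The main obstacle I anticipate is purely bookkeeping — ensuring that the element $x$ really descends to a single level $M_{\beta}$ and that the two vertical maps in the square are the canonical ones — rather than any genuine homological difficulty, since the flatness of the $Q_i$ removes the only nontrivial hypothesis of Proposition \ref{filtration prop}.
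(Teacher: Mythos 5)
Your proof is correct and follows essentially the same route as the paper: part (1) is obtained by applying Proposition \ref{filtration prop} with $\mathcal{T}=\wMLF$ and $\mathcal{Q}=\mathcal{F}$ (the inclusion $\mathcal{F}\subseteq\mathcal{T}^{\top}$ being automatic from flatness), and part (2) by re-running the limit-ordinal step of that proposition's proof, which is exactly what the paper does when it says that argument ``remains valid.'' Your explicit observation of \emph{why} it remains valid without any hypothesis on successive quotients --- namely that flatness of the $Q_i$ makes $\Tor_1^R(M/M_{\beta},Q_i)=0$ for free --- is a correct and welcome elaboration of the point the paper leaves implicit.
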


\begin{proof}
(1) Since any flat module is in $\mathcal{T}^{\top}$, (1) follows from Proposition \ref{filtration prop} immediately by  putting $\Q=\mathcal{F}$ and $\mathcal{T}=\wMLF$.

(2) If $\alpha$ is a successor ordinal, then $M=M_{\alpha-1}$. If $\alpha$ is a limit ordinal, then the proof of Proposition \ref{filtration prop} on the limit case remains valid.
\end{proof}

\section{Characterizing $S$-Coherent rings using $S$-$\mathcal{F}$-ML }
In this section, we utilize the class $S$-$\mathcal{F}$-ML of $S$-Mittag-Leffler modules with respect to all flat modules to characterize $S$-Coherent rings, and then we give an $S$-version of Chase theorem using $S$-flat modules as an application. Firstly, we build a connection among  finitely generated modules, $S$-Mittag-Leffler modules with respect to all flat modules and $S$-finitely presented modules.
\begin{lemma}\label{S-epi}
Let $\mathcal{F}$ be a set of flat modules containing $R$, $\phi_{M,\mathcal{F}}$ the natural homomorphism.
\begin{enumerate}
    \item If $\phi_{M,\mathcal{F}}$ is an $S$-epimorphism, then $M$ is $S$-finite.
    \item Moreover, if $M$ is finitely generated and $\phi_{M,\mathcal{F}}$ is an $S$-isomorphism, then $M$ is $S$-finitely presented.
\end{enumerate}
\end{lemma}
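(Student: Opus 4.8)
The plan is to prove both parts by specializing the family of flat modules to copies of the free module $R$ and exploiting that the canonical map is already surjective on finitely generated free modules.

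\textbf{Part (1).} First I would take the index family to consist of copies of $R$ indexed by the underlying set of $M$, so that $\phi_{M,\mathcal{F}}$ becomes the canonical map $M\otimes_R\prod_{m\in M}R\to\prod_{m\in M}(M\otimes_R R)=\prod_{m\in M}M$. I then evaluate the $S$-epimorphism hypothesis on the distinguished element $(m)_{m\in M}$: since $\Coker(\phi_{M,\mathcal{F}})$ is $S$-torsion by Remark, there is a single $s\in S$ with $s\,(m)_{m\in M}$ in the image. Writing a preimage as a finite sum $\sum_{j=1}^n x_j\otimes a_j$, its $m$-th coordinate is $\sum_j a_j(m)x_j$, so comparing coordinates gives $sm=\sum_j a_j(m)x_j$ for every $m\in M$. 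Hence $sM\subseteq N$ with $N=\langle x_1,\dots,x_n\rangle$ finitely generated, which is exactly the definition of $M$ being $S$-finite.

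\textbf{Part (2).} Since $M$ is finitely generated, I would fix a presentation $0\to K\to R^n\to M\to 0$ with $R^n$ finitely generated free, and reduce the claim to showing that $K$ is $S$-finite (then $M$ is $S$-finitely presented by Definition \ref{D S-coh}). For an arbitrary index set $I$ and $Q_i=R$, tensoring with $\prod_{i\in I}R$ and mapping into the corresponding product yields a commutative diagram whose top row $K\otimes_R\prod_{i\in I}R\xrightarrow{\alpha}R^n\otimes_R\prod_{i\in I}R\xrightarrow{\beta}M\otimes_R\prod_{i\in I}R\to 0$ is right exact, whose bottom row $0\to K^I\xrightarrow{\alpha'}(R^n)^I\xrightarrow{\beta'}M^I\to 0$ is exact because products are exact, and whose middle vertical map $\phi_{R^n}$ is an isomorphism. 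My goal is to show $\phi_K$ is an $S$-epimorphism, after which part (1), applied to $K$ with the families of copies of $R$, gives that $K$ is $S$-finite.

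\textbf{The chase.} Given $\xi\in K^I$, I set $\eta=\phi_{R^n}^{-1}(\alpha'(\xi))$; then $\phi_M(\beta(\eta))=\beta'(\alpha'(\xi))=0$, so $\beta(\eta)\in\Ker\phi_M$, which is $S$-torsion because $\phi_{M,\mathcal{F}}$ is an $S$-isomorphism. Choosing $s\in S$ with $\beta(s\eta)=0$ and using right exactness of the top row, I get $s\eta=\alpha(\zeta)$ for some $\zeta$. Commutativity of the left square gives $\alpha'(\phi_K(\zeta))=\phi_{R^n}(\alpha(\zeta))=\phi_{R^n}(s\eta)=s\,\alpha'(\xi)=\alpha'(s\xi)$, and injectivity of $\alpha'$ then forces $\phi_K(\zeta)=s\xi$. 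Thus every $\xi$ has $s\xi\in\im\phi_K$, i.e. $\Coker\phi_K$ is $S$-torsion, so $\phi_K$ is an $S$-epimorphism, completing the reduction.

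I expect the chase in part (2) to be the main obstacle. The delicate point is to produce a single $s\in S$ that simultaneously clears $\beta(\eta)$ to zero and witnesses $s\xi\in\im\phi_K$; this is precisely where the $S$-monomorphism half of the hypothesis (the $S$-torsionness of $\Ker\phi_M$) must be combined with the exactness of products and the isomorphism $\phi_{R^n}$. Everything else is a routine verification that the canonical maps assemble into a commutative diagram and that $\phi_{R^n}$ is the standard isomorphism for a finitely generated free module.
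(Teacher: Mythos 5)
Your proposal is correct and follows essentially the same route as the paper: part (1) uses the identical trick of evaluating the map $M\otimes_R R^M\to M^M$ at the diagonal element $(m)_{m\in M}$, and part (2) uses the same presentation $0\to K\to R^n\to M\to 0$ tensored against $\prod_{i\in I}R$ to conclude that $\phi_K$ is an $S$-epimorphism and hence $K$ is $S$-finite by (1). The only difference is that you spell out the diagram chase (which the paper leaves implicit), and your chase correctly isolates that only the $S$-monomorphism half of the $S$-isomorphism hypothesis is actually needed.
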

\begin{proof}
$(1)$ If $\phi_{M,\mathcal{F}}$ is an $S$-epimorphism, we consider the exact sequence
$$\xymatrix{
M\otimes_R  R^M \ar[rr]^{\phi_{M,\mathcal{F}}}\ar@{->>}[rd] &&M^M \ar[r]^{} & T\ar[r]^{} &  0\\
    &\Im \phi_{M} \ar@{^{(}->}[ru] &&  &   \\}$$
with $T$ an $S$-torsion module. Let $x=(m)_{m\in M}\in M^M$, there is some $s\in S$ such that $sx\in \Im \phi_{M,\mathcal{F}}$. Subsequently, for any $i\in M$, there exists $m_j\in M, r_{j,i}\in R$ such that $sx=\phi_{M,\mathcal{F}}(\sum_{j=1}^n m_j\otimes (r_{j,i})_{i\in M})=(\sum_{j=1}^n m_j r_{j,i})_{i\in M}$. Set $K=\langle\{m_j|j=1,....,n \}\rangle$ be a finitely generated submodule of $M$. Now, for any $m\in M$, $sm=\sum_{j=1}^n m_j r_{j,m}\in K$, thus $sM\subseteq K\subseteq M$ and then $M$ is $S$-finite.

$(2)$ Let $0\rightarrow K\rightarrow F\rightarrow M\rightarrow 0$ be an exact sequence, where $F$ is a finitely generated free $R$-module. Consider the following commutative diagram of exact sequences,
 $$\xymatrix{
& K\otimes_R \prod_{i\in I} R\ar[d]_{\phi_{K,\mathcal{F}}}\ar[r]^{} & F\otimes_R  \prod_{i\in I} R \ar[d]_{\phi_{F,\mathcal{F}}}\ar[r]^{} & M\otimes_R  \prod_{i\in I} R \ar[d]_{\phi_{M,\mathcal{F}}}\ar[r]^{} &  0\\
   0 \ar[r]^{} & \prod_{i\in I} (K\otimes_R R )\ar[r]^{} &\prod_{i\in I} (F\otimes_R R ) \ar[r]^{} & \prod_{i\in I} (M\otimes_R R ) \ar[r]^{} &  0\\}$$
Since  $\phi_{F,\mathcal{F}}$ is an isomorphism and $\phi_{M,\mathcal{F}}$  is an $S$-isomorphism, then $\phi_{K,\mathcal{F}}$ is an $S$-epimorphism, thus $K$ is $S$-finite by (1).

\end{proof}

\begin{proposition}\label{finite}
An $R$-module $M$ is $S$-finitely presented if and only if  it is in $S$-$\mathcal{F}$-ML and finitely generated.
\end{proposition}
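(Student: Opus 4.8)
The plan is to prove the two implications separately. The backward direction follows almost immediately from Lemma~\ref{S-epi}(2), whereas the forward direction requires genuine work; the only classical input I will need in both directions is the standard fact that for a \emph{finitely generated} module $M$ the canonical map $\phi_{M,\F}$ is automatically a (genuine) epimorphism. This I would justify quickly: choosing a finite generating set gives a surjection $R^n\twoheadrightarrow M$, and comparing $\phi_{R^n,\F}$ (an isomorphism) with $\phi_{M,\F}$ in the evident commutative square forces $\phi_{M,\F}$ to be surjective.

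For the backward direction, suppose $M$ is finitely generated and lies in $\wMLF$. By the fact just recalled, $\phi_{M,\F}$ is an epimorphism, hence an $S$-epimorphism; and membership in $\wMLF$ says precisely that $\phi_{M,\F}$ is an $S$-monomorphism. Therefore $\phi_{M,\F}$ is an $S$-isomorphism, and Lemma~\ref{S-epi}(2) immediately yields that $M$ is $S$-finitely presented.

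For the forward direction, suppose $M$ is $S$-finitely presented; then $M$ is finitely generated by Remark~\ref{FPFG}(1), so it remains to check $M\in\wMLF$, i.e.\ that $\phi_{M,\F}$ is an $S$-monomorphism for every family $\{Q_i\}_{i\in I}$ of flat modules. I would fix a presentation $0\to K\to F\to M\to 0$ with $F$ finitely generated free and $K$ $S$-finite, and tensor it. Since each $Q_i$ is flat, the lower row $0\to\prod_i(K\otimes_RQ_i)\to\prod_i(F\otimes_RQ_i)\to\prod_i(M\otimes_RQ_i)\to 0$ is exact (tensoring preserves the short exact sequence, and direct products are exact in $R$-Mod), while the upper row obtained by applying $-\otimes_R\prod_iQ_i$ is right exact; the vertical maps $\phi_{K,\F},\phi_{F,\F},\phi_{M,\F}$ form a commutative ladder in which $\phi_{F,\F}$ is an isomorphism. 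A routine diagram chase then shows $\phi_{M,\F}$ is an $S$-monomorphism \emph{provided} $\phi_{K,\F}$ is an $S$-epimorphism.

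The crux, and the step I expect to be the main obstacle, is thus to deduce from the $S$-finiteness of $K$ that $\phi_{K,\F}$ is an $S$-epimorphism (note Lemma~\ref{S-epi}(1) runs the opposite way, so this needs a separate argument). Here I would choose a finitely generated submodule $N\subseteq K$ and $s\in S$ with $sK\subseteq N$, so that $K/N$ is annihilated by $s$ and hence $S$-torsion. Comparing $\phi_{N,\F}$ (surjective, by the finitely generated case) with $\phi_{K,\F}$ along the inclusion $N\hookrightarrow K$, and using right exactness of $-\otimes_RQ_i$ to identify the cokernel of $\prod_i(N\otimes_RQ_i)\to\prod_i(K\otimes_RQ_i)$ with $\prod_i((K/N)\otimes_RQ_i)$, one sees that $\Coker\phi_{K,\F}$ is a quotient of $\prod_i((K/N)\otimes_RQ_i)$, which is killed by $s$ and so $S$-torsion. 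Hence $\phi_{K,\F}$ is an $S$-epimorphism, which completes the chase and the proof.
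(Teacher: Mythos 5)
Your proof is correct, and its skeleton --- the presentation $0\to K\to F\to M\to 0$, the two-row ladder with $\phi_{F,\F}$ an isomorphism, and the reduction of the forward direction to showing that $S$-finiteness of $K$ forces $\phi_{K,\F}$ to be an $S$-epimorphism --- matches the paper's. You diverge in two places, both to good effect. In the backward direction, the paper redoes the diagram chase to conclude that $\phi_{K,\F}$ is an $S$-epimorphism and then cites Lemma~\ref{S-epi}(1), whereas you observe that $\phi_{M,\F}$ is automatically surjective for finitely generated $M$ (via a surjection $R^n\twoheadrightarrow M$) and then quote Lemma~\ref{S-epi}(2) as stated; your factoring is more economical, at the negligible cost of that standard surjectivity fact. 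More substantively, for the crux of the forward direction the paper localizes: taking $K'\subseteq K$ finitely generated with $sK\subseteq K'$, it uses $K'_S=K_S$ and verifies by an element computation that $\bigl(\prod_i(K'\otimes_RQ_i)\bigr)_S\to\bigl(\prod_i(K\otimes_RQ_i)\bigr)_S$ is surjective, so that $\phi_{K,\F}$ becomes surjective after localizing at $S$. You instead stay over $R$: comparing $\phi_{K,\F}$ with the surjective $\phi_{N,\F}$ along $N\hookrightarrow K$, you identify $\Coker\,\phi_{K,\F}$ as a quotient of $\prod_i\bigl((K/N)\otimes_RQ_i\bigr)$, which is annihilated by the single element $s$. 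This is a genuinely different way to finish, and arguably a cleaner one: it makes explicit the uniform annihilator that lets $S$-torsionness survive the infinite product (a product of $S$-torsion modules need not be $S$-torsion without such uniformity --- exactly the point the paper's localization computation also exploits, but less visibly), and it avoids commuting localization with infinite products. Both arguments are complete; yours would be a legitimate drop-in replacement for the paper's.
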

\begin{proof}
For the ``only if'' part, $M$ is finitely generated by Remark \ref{FPFG} (1). Now, we show $M$ is in $S$-$\mathcal{F}$-ML. Let $0\rightarrow K\rightarrow F\rightarrow M\rightarrow 0$ be an exact sequence, where $K$ is $S$-finite and $F$ is a finitely generated free $R$-module. Consider the following commutative diagram of exact sequences,
 $$\xymatrix{
& K\otimes_R \prod_{i\in I} R\ar[d]_{\phi_{K,\mathcal{F}}}\ar[r]^{} & F\otimes_R  \prod_{i\in I} R \ar[d]_{\cong}\ar[r]^{} & M\otimes_R  \prod_{i\in I} R \ar[d]_{\phi_{M,\mathcal{F}}}\ar[r]^{} &  0\\
   0 \ar[r]^{} & \prod_{i\in I} (K\otimes_R R )\ar[r]^{} &\prod_{i\in I} (F\otimes_R R ) \ar[r]^{} & \prod_{i\in I} (M\otimes_R R ) \ar[r]^{} &  0.\\}$$

To prove  $\phi_{K,\mathcal{F}}$ is an $S$-monomorphism, we only need to show  $\phi_{K,\mathcal{F}}$ is an $S$-epimorphism. Since $K$ is $S$-finite, there exists a finitely generated submodule $K'$ of $K$ such that $sK\subseteq K'\subseteq K$ for some $s\in S$. The natural commutative diagram

$$\xymatrix{
 K'\otimes_R \prod_{i\in I} F_i     \ar[d]_{\phi_{K,\mathcal{F}}}\ar[r]^{} &     K\otimes_R  \prod_{i\in I} F_i \ar[d]_{\phi_{K,\mathcal{F}}}\\
 \prod_{i\in I} (K'\otimes_R F_i )\ar[r]^{}                            &     \prod_{i\in I} (K\otimes_R F_i )  \\ }$$
induces the following commutative diagram by localizing at $S$,
$$\xymatrix{
 K'_S\otimes_R \prod_{i\in I} F_i\ar@{->>}[d]_{\phi_{K,\mathcal{F}}^S}\ar[r]^{\cong} &     K_S\otimes_R  \prod_{i\in I} F_i\ar[d]_{\phi_{K,\mathcal{F}}^S}\\
 (\prod_{i\in I} (K'\otimes_R F_i ))_S\ar[r]^{f}                            &     (\prod_{i\in I} (K\otimes_R F_i ))_S.\\}$$
For any $k_i\in K, q_i\in F_i(i\in I)$  and $t\in S$, we obtain $\frac{(k_i\otimes_Rq_i)_{i\in I}}{t}=\frac{s(k_i\otimes_Rq_i)_{i\in I}}{st} =\frac{(sk_i\otimes_Rq_i)_{i\in I}}{st}  \in (\prod_{i\in I} (K'\otimes_R F_i ))_S$, thus  $f$ is an epimorphism (moreover, since  $f$ is a monomorphism, $f$ is an isomorphism). Thus $\phi_{K,\mathcal{F}}^S$ is an epimorphism, and then $\phi_{K,\mathcal{F}}$ is an $S$-epimorphism.

For the ``if'' part, let $M$ be a  finitely generated $R$-module in $S$-$\mathcal{F}$-ML. Consider the exact sequence $0\rightarrow K\rightarrow F \rightarrow M\rightarrow 0$ with $F$ a finitely generated free module, then there is a commutative diagram of exact sequences,
  $$\xymatrix{
& K\otimes_R \prod_{i\in I} F_i\ar[d]_{\phi_{K,\mathcal{F}}}\ar[r]^{} & F\otimes_R  \prod_{i\in I} F_i \ar[d]_{\phi_{F,\mathcal{F}}}\ar[r]^{} & M\otimes_R  \prod_{i\in I} F_i \ar[d]_{\phi_{M,\mathcal{F}}}\ar[r]^{} &  0\\
   0 \ar[r]^{} & \prod_{i\in I} (K\otimes_R F_i )\ar[r]^{} &\prod_{i\in I} (F\otimes_R F_i ) \ar[r]^{} & \prod_{i\in I} (M\otimes_R F_i ) \ar[r]^{} &  0.\\}$$
Note that $\phi_{M,\mathcal{F}}$ is an $S$-monomorphism and $\phi_{F,\mathcal{F}}$ is an isomorphism, thus $\phi_{K,\mathcal{F}}$ is an $S$-epimorphism. Then $K$ is $S$-finite by Lemma \ref{S-epi}(1) and $M$ is $S$-finitely presented.

\end{proof}

In {\cite[Corollary 4.3]{mc}}, Izurdiaga proved a ring $R$ is coherent if and only if the class of  Mittag-Leffler $R$-modules with respect to all flat modules  is closed under submodules. Similarly, we give an $S$-version of of Izurdiaga's result on $S$-coherent rings.
\begin{theorem}\label{S-coh}
The following assertions are equivalent for a ring $R$:
\begin{enumerate}
    \item $R$ is an $S$-coherent ring;
    \item every ideal is in $S$-$\mathcal{F}$-ML;
     \item every finitely generated ideal is in $S$-$\mathcal{F}$-ML;
    \item every submodule of projective modules is in $S$-$\mathcal{F}$-ML;
    \item the class $S$-$\mathcal{F}$-ML is closed under finitely generated submodules;
    \item the class $S$-$\mathcal{F}$-ML is closed under submodules.
\end{enumerate}
\end{theorem}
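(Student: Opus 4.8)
The plan is to prove all six equivalences through the cycle $(1)\Rightarrow(6)\Rightarrow(4)\Rightarrow(2)\Rightarrow(3)\Rightarrow(1)$ together with the side implications $(6)\Rightarrow(5)\Rightarrow(3)$, so that everything collapses to one equivalence class. Most of these are formal once the main implication is in hand. Indeed $(2)\Rightarrow(3)$ and $(6)\Rightarrow(5)$ are trivial. For $(3)\Rightarrow(1)$, a finitely generated ideal lying in \wMLF\ is finitely generated and hence $S$-finitely presented by Proposition \ref{finite}, which is exactly $S$-coherence. For $(5)\Rightarrow(3)$ I note that $R$ is $S$-finitely presented, so $R\in\wMLF$ by Proposition \ref{finite}, and every finitely generated ideal is a finitely generated submodule of $R$. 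For $(4)\Rightarrow(2)$ every ideal is a submodule of the projective module $R$. Finally, for $(6)\Rightarrow(4)$ I would first observe that every projective module lies in \wMLF: since $R\in\wMLF$ and \wMLF\ is closed under direct sums and direct summands, every free and hence every projective module is in \wMLF; then $(6)$ applies to their submodules.

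The heart of the matter is $(1)\Rightarrow(6)$. By Corollary \ref{PFRMLP} it suffices to work with \wMLR, so that membership of a module $M$ in \wMLF\ is tested by requiring $\phi_M^I\colon M\otimes_R R^I\to M^I$ to be an $S$-monomorphism for every index set $I$. The key input is the lemma: if $R$ is $S$-coherent, then $R^I$ is $S$-flat for every $I$, equivalently $\Tor_1^R(Y,R^I)$ is $S$-torsion for every $R$-module $Y$. To prove it, let $J$ be a finitely generated ideal. By $S$-coherence $J$ is $S$-finitely presented, so $J\in\wMLF$ by Proposition \ref{finite} and therefore $\phi_J^I$ is an $S$-monomorphism; since $J$ is finitely generated, $\phi_J^I$ is also surjective, hence an $S$-isomorphism. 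Chasing the commutative square relating $\phi_J^I$ and the isomorphism $\phi_R^I$, and using that $J^I\to R^I$ is injective, one finds that $\Tor_1^R(R/J,R^I)=\Ker(J\otimes_R R^I\to R^I)$ is $S$-torsion. Localizing at $S$ yields $\Tor_1^{R_S}(R_S/J_S,(R^I)_S)=0$ for every finitely generated ideal $J$, whence $(R^I)_S$ is a flat $R_S$-module by the ideal-theoretic flatness criterion; thus $R^I$ is $S$-flat by Proposition \ref{c s-flat}, and then $\Tor_1^R(Y,R^I)_S\cong\Tor_1^{R_S}(Y_S,(R^I)_S)=0$ for every $Y$.

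Granting the lemma, $(1)\Rightarrow(6)$ is a diagram chase. Let $N\subseteq M$ with $M\in\wMLF=\wMLR$, fix $I$, and write $\iota\colon N\hookrightarrow M$. This gives a commutative square with rows $\phi_N^I$ and $\phi_M^I$ and right-hand map $\iota^I\colon N^I\to M^I$, which is injective. If $x\in\Ker\phi_N^I$ then $\phi_M^I((\iota\otimes R^I)(x))=0$ by commutativity, so $(\iota\otimes R^I)(x)$ is killed by some $s\in S$ because $\phi_M^I$ is an $S$-monomorphism. Hence $sx\in\Ker(\iota\otimes R^I)$, which by exactness of $\Tor_1^R(M/N,R^I)\to N\otimes_R R^I\to M\otimes_R R^I$ is the image of $\Tor_1^R(M/N,R^I)$. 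By the lemma this image is $S$-torsion, so $s'(sx)=0$ for some $s'\in S$, and $x$ is $S$-torsion. Thus $\phi_N^I$ is an $S$-monomorphism for every $I$, that is, $N\in\wMLR=\wMLF$.

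The main obstacle is the lemma that every product $R^I$ is $S$-flat; this is the genuinely new ingredient and plays the role that Chase's theorem plays in Izurdiaga's classical argument. The point requiring care is that localization does not commute with infinite products, so one cannot deduce flatness of $(R^I)_S$ merely from the coherence of $R_S$; the argument above circumvents this by extracting the $S$-isomorphism $\phi_J^I$ from Proposition \ref{finite} and reducing to the ideal-theoretic flatness criterion over $R_S$. I emphasize that this lemma is established independently of the present theorem, so no circularity arises when it is reused in the $S$-version of Chase's theorem; once it is in place the remaining implications are routine and I expect no further difficulty.
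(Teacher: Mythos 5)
Your proof is correct, but it takes a genuinely different route from the paper's. The paper never proves $(1)\Rightarrow(6)$ directly: it establishes $(1)\Leftrightarrow(3)$ via Proposition \ref{finite}, proves $(3)\Rightarrow(2)$ by appealing to Izurdiaga's deconstruction result \cite[Corollary 3.6]{mc} (reducing to direct unions of modules in \wMLF, which are handled by Corollary \ref{w-MLP}(2)), proves $(2)\Rightarrow(4)$ by transfinite induction on submodules of free modules $R^{(\beta)}$, and finally proves $(4)\Rightarrow(6)$ by a pullback-and-splitting argument; the $S$-version of Chase's theorem (Theorem \ref{S-chase}) is then derived afterwards as an application of the equivalence. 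You invert this logical order: your key lemma --- if $R$ is $S$-coherent then $R^I$ is $S$-flat for every index set $I$, hence $\Tor_1^R(Y,R^I)$ is $S$-torsion for all $Y$ --- is precisely implication $(1)\Rightarrow(4)$ of Theorem \ref{S-chase}, proved directly from Proposition \ref{finite} (giving the $S$-isomorphism $\phi_J^I$ for finitely generated $J$) and the localization criterion of Proposition \ref{c s-flat}; closure under submodules then drops out of a short $\Tor_1$ diagram chase through Corollary \ref{PFRMLP}. Your argument is sound: the lemma uses only results that precede the theorem, so the non-circularity you claim does hold (indeed the paper's own proof of $(4)\Rightarrow(1)$ in Theorem \ref{S-chase} is likewise independent of Theorem \ref{S-coh}), and your observation that localization fails to commute with infinite products --- so one cannot simply quote coherence of $R_S$ --- is exactly the point needing care, which you handle correctly. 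The trade-off: your route is shorter and more elementary, bypassing the filtration machinery (Proposition \ref{filtration prop}, Corollary \ref{w-MLP}) and the external input \cite[Corollary 3.6]{mc} entirely, and it delivers the hard half of the $S$-Chase theorem up front; the paper's route stays inside Mittag-Leffler/deconstruction theory, which costs more set-theoretic technology but yields the filtration-closure properties of \wMLF\ as by-products and lets Chase's theorem fall out as a one-line corollary.
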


\begin{proof}

$(2)\Rightarrow(3)$, $(6)\Rightarrow(4)$ , $(4)\Rightarrow(2)$,  $(6)\Rightarrow(5)$ and $(5)\Rightarrow(3)$ are obvious. $(3)\Leftrightarrow(1)$: By Proposition \ref{finite}.

$(4)\Rightarrow(6)$: Let $M$ be an $R$-module in $S$-$\mathcal{F}$-ML, $K$ a submodule of $ M$ and consider the following pull-back diagram,
$$\xymatrix@R=20pt@C=25pt{ & & 0\ar[d]&0\ar[d]&\\
& & K'\ar[d]\ar@{=}[r]&K'\ar[d]&\\
0 \ar[r]^{} & K\ar@{=}[d]\ar[r]^{} & Q \ar[d]\ar[r]^{} &P\ar[d]\ar[r]^{} &  0\\
0 \ar[r]^{} & K\ar[r]^{} & M \ar[d]\ar[r]^{} &M/K\ar[d]\ar[r]^{} &  0\\
& & 0 &0 &\\}$$
with $P$ projective. By (4), $K'$ is in $S$-$\mathcal{F}$-ML and thus $Q$ is in $S$-$\mathcal{F}$-ML. As the middle row split, $K$  is also in  $S$-$\mathcal{F}$-ML.

$(2)\Rightarrow(4)$: As $S$-$\mathcal{F}$-ML is closed under direct summand, we just consider the free module case. Assume $L$ is a submodule of $R^{(\beta)}$, where $\beta$ is an ordinal, we prove (4) by induction on $\beta$.

If  $\beta$ is a successor ordinal, consider the exact sequence\begin{center}
$0\ra L\bigcap R^{(\beta-1)}\ra L\bigcap R^{(\beta)}\ra J\ra 0$
\end{center}
for some ideal $J$ of $R$. Since the ideal $J$ is in $S$-$\mathcal{F}$-ML by (2) and $L\bigcap R^{(\beta-1)}$ is in $S$-$\mathcal{F}$-ML by induction, thus $L=L\bigcap R^{(\beta)}$  is in $S$-$\mathcal{F}$-ML.

If  $\beta$ is a limit ordinal, then $L=\bigcup_{\gamma<\beta}(L\cap R^{(\gamma)})\in S$-$\mathcal{F}$-ML by $L$ is a direct unions of modules in $S$-$\mathcal{F}$-ML.

$(3)\Rightarrow(2)$: By {\cite[Corollary 3.6]{mc}}, we just need to prove any $(\aleph_0,$ $S$-$\mathcal{F}$-ML)-free module (which is a direct union of modules in $S$-$\mathcal{F}$-ML) belong to  $S$-$\mathcal{F}$-ML. This can exactly be deduced from Corollary \ref{w-MLP}(2).
\end{proof}

In {\cite[Theorem 2.1]{c}}, Chase proved that a ring $R$ is coherent if and only if any product of $R$ is flat if and only if any product of flat $R$-modules is  flat. We extend it to the $S$-version and obtain the promised result.
\begin{theorem}\label{S-chase}
\textbf{$($the $S$-version of Chase Theorem$)$}
The following assertions are equivalent:
\begin{enumerate}
    \item $R$ is an $S$-coherent ring;
    \item any product of flat $R$-modules is $S$-flat;
       \item any product of  projective $R$-modules is $S$-flat;
        \item any product of $R$ is $S$-flat;
\end{enumerate}
\end{theorem}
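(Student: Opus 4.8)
The plan is to reduce the whole theorem to a single bridging observation that converts the $S$-flatness of a product into the $S$-Mittag-Leffler condition on finitely generated ideals, and then to feed that into Theorem \ref{S-coh}. The implications $(2)\Rightarrow(3)\Rightarrow(4)$ are immediate, since every projective module is flat and $R$ itself is free, so the substance lies entirely in $(1)\Leftrightarrow(2)$ and in $(4)\Rightarrow(1)$.

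For the bridge, I would fix a finitely generated ideal $I$ of $R$ and a family $\{Q_\lambda\}_{\lambda\in\Lambda}$ of flat modules, and set $P=\prod_{\lambda}Q_\lambda$. The key is the commutative square whose top edge is the natural map $f\colon I\otimes_R P\to R\otimes_R P\cong P$, whose left edge is the canonical map $\phi_{I,\F}\colon I\otimes_R P\to\prod_\lambda(I\otimes_R Q_\lambda)$, and whose bottom edge is the inclusion $\prod_\lambda(I\otimes_R Q_\lambda)\hookrightarrow\prod_\lambda Q_\lambda\cong P$. The bottom edge is a genuine monomorphism because each $Q_\lambda$ is flat, so each map $I\otimes_R Q_\lambda\to Q_\lambda$ is injective. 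Consequently $\Ker f=\Ker\phi_{I,\F}$, and therefore $f$ is an $S$-monomorphism if and only if $\phi_{I,\F}$ is. In other words, $P$ is $S$-flat (as tested against this $I$) exactly when $\phi_{I,\F}$ is an $S$-monomorphism for this family. Quantifying over all flat families and all finitely generated ideals yields the bridge: every product of flat modules is $S$-flat if and only if every finitely generated ideal lies in \wMLF.

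With the bridge in hand the equivalences assemble quickly. For $(1)\Leftrightarrow(2)$: by Theorem \ref{S-coh}, $R$ is $S$-coherent if and only if every finitely generated ideal is in \wMLF, and by the bridge this is precisely the assertion that every product of flat modules is $S$-flat. For $(4)\Rightarrow(1)$: I would apply the bridge to the constant family $Q_\lambda=R$; the hypothesis that each $R^{\Lambda}$ is $S$-flat forces $\phi_{I,\F}$ (for the family of copies of $R$) to be an $S$-monomorphism for every finitely generated $I$ and every index set, that is, every finitely generated ideal lies in \wMLR. By Corollary \ref{PFRMLP} we have $\wMLR=\wMLF$, so every finitely generated ideal is in \wMLF, and Theorem \ref{S-coh} again delivers $S$-coherence.

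The main obstacle is the kernel identification in the bridging square: one must verify that the $S$-monomorphism property transfers cleanly across the genuine monomorphism forming the bottom edge, so that the $S$-torsion kernels of $f$ and $\phi_{I,\F}$ coincide, and that the per-family computations aggregate correctly into the Mittag-Leffler condition quantified over all flat families and all finitely generated ideals (recall that $S$-flatness is tested only on finitely generated ideals, matching item (3) of Theorem \ref{S-coh}). A secondary but essential point is that hypothesis $(4)$ a priori only supplies the \wMLR\ condition, which looks weaker than the \wMLF\ condition needed for $S$-coherence; the identification $\wMLR=\wMLF$ from Corollary \ref{PFRMLP} is exactly what closes this gap and must not be omitted.
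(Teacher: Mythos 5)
Your proposal is correct and follows essentially the same route as the paper: both reduce the theorem to Theorem \ref{S-coh} via the commutative square whose bottom edge $\prod_\lambda(I\otimes_R Q_\lambda)\hookrightarrow\prod_\lambda Q_\lambda$ is injective by flatness of each factor, giving $\Ker f=\Ker\phi_{I,\mathcal{F}}$, and both close the $(4)\Rightarrow(1)$ gap with Corollary \ref{PFRMLP} to pass from \wMLR\ to \wMLF. Your ``bridge'' merely packages the paper's two diagram arguments (for $(1)\Rightarrow(2)$ and $(4)\Rightarrow(1)$) into one statement, testing $S$-flatness directly from Definition \ref{s-flat} instead of via Proposition \ref{c s-flat}(2) as the paper does.
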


\begin{proof}
$(1)\Rightarrow(2)$: Let $R$ be an $S$-coherent ring,  $F_i$ be flat $R$-modules, $J$ a finitely generated ideal, we have $J\in S$-$\mathcal{F}$-ML by Theorem \ref{S-coh}. Consequently, the natural homomorphism $J\otimes_R (\prod_{i\in I} F_i)\twoheadrightarrow \prod_{i\in I} J\otimes F_i\cong \prod_{i\in I} (J F_i)=J(\prod_{i\in I} F_i)$ is an $S$-isomorphism, thus $\prod_{i\in I} F_i$ is an $S$-flat module from Proposition \ref{c s-flat}.

$(2)\Rightarrow(3)\Rightarrow(4)$ are obvious.

$(4)\Rightarrow(1)$:  Let $J$ be a finitely generated ideal of $R$, $0\rightarrow J\rightarrow R\rightarrow R/J\rightarrow 0$ an exact sequence. Consider the following commutative diagram,

 $$\xymatrix{
&J\otimes_R \prod_{i\in I} R\ar[d]_{\phi_{J,\mathcal{R}}}\ar[r]^{f} & R\otimes_R  \prod_{i\in I}R \ar[d]_{\phi_{R,\mathcal{R}}}^{\cong}\ar[r]^{} & R/J\otimes_R  \prod_{i\in I} R \ar[d]_{\phi_{M,\mathcal{R}}}^{\cong}\ar[r]^{} &  0\\
   0 \ar[r]^{} & \prod_{i\in I} (J\otimes_RR)\ar[r]^{} &\prod_{i\in I} (R\otimes_RR ) \ar[r]^{} & \prod_{i\in I} (R/J\otimes_RR ) \ar[r]^{} &  0.\\}$$
Since $\prod_{i\in I} R$ is an $S$-flat module, then $f$ is an $S$-monomorphism. Thus $\Ker (f)=\Ker (\phi_{J,\mathcal{R}})$ is $S$-torsion and then $\phi_{J,\mathcal{R}}$ is an $S$-monomorphism and thus $J\in$ $S$-$\mathcal{F}$-ML from Corollary \ref{PFRMLP}.  Consequently, $R$ is $S$-coherent from Theorem \ref{S-coh}.
\end{proof}

\end{document}